\documentclass[10pt, a4paper,reqno]{amsart}
\usepackage{amsthm,euscript,colonequals}
\usepackage{mathtools} %needed for \xmapsto
\usepackage{stmaryrd}%need for \mapsfrom
\theoremstyle{}
{\theoremstyle{definition}
\newtheorem{dfn}{Definition}[section]}
\newtheorem{prop}[dfn]{Proposition}

\newtheorem{thm}[dfn]{Theorem}
{\theoremstyle{definition}
}

\newtheorem{cor}[dfn]{Corollary}

{\theoremstyle{definition}
}

\usepackage[top=30truemm,bottom=30truemm,left=35truemm,right=35truemm]{geometry}
\usepackage{amsmath,amssymb,amscd,bm,graphicx,tikz-cd,upgreek,dsfont,amsfonts,tikz}
\usepackage[all]{xy}
\usepackage[colorlinks]{hyperref}

%\usepackage{refcheck}

%%nodes in Dynkin diagrams
\tikzset{
        DB/.style={circle,draw=black,circle,fill=white,inner sep=0pt, minimum size=5pt},
        DW/.style={circle,draw=black,fill=black,inner sep=0pt, minimum size=5pt},
         gap/.style={inner sep=0.5pt,fill=white},
}

%marginpars coded nicely
\usepackage{setspace}
\setlength{\marginparwidth}{1in}
\newcommand{\marginparstretch}{0.6}
\let\oldmarginpar\marginpar
\renewcommand\marginpar[1]{\-\oldmarginpar[\framebox{\setstretch{\marginparstretch}\begin{minipage}{\marginparwidth}{\raggedleft\tiny #1}\end{minipage}}]{\framebox{\setstretch{\marginparstretch}\begin{minipage}{\marginparwidth}{\raggedright\tiny #1}\end{minipage}}}}

% list settings
\usepackage{enumitem}
\setlist[enumerate]{format=\normalfont}

%%Nice tables
\usepackage{array,multirow,booktabs,longtable}
\setlength{\heavyrulewidth}{1.2pt}
\setlength{\abovetopsep}{4pt}

\numberwithin{equation}{section}

\newcommand{\ii}{\kern 1pt {\rm i}\kern 1pt }

\newcommand{\Hom}{\operatorname{Hom}}

\newcommand{\End}{\operatorname{End}}

\newcommand{\Aut}{\operatorname{Aut}}

\renewcommand{\mod}{\operatorname{mod}}

\newcommand{\add}{\operatorname{add}}

\newcommand{\proj}{\operatorname{proj}}

\newcommand{\Kb}{\operatorname{K^b}}

\newcommand\Db{\mathop{\rm{D}^b}}

\newcommand{\tilt}{\operatorname{\sf tilt}}
\newcommand{\silt}{\operatorname{\sf silt}}

\newcommand{\cStab}[1]{\mathrm{Stab}_{#1}^{\kern -0.5pt \circ}\kern -0.25pt}
\newcommand{\TitsR}{{\sf Tits}_{\kern 1pt \bR}}
\newcommand{\CTitsR}{{\sf CTits}_{\kern 1pt \bR}}
\newcommand{\CTitsC}{{\sf CTits}_{\kern 1pt \bC}}
\newcommand{\cAut}{\mathrm{Aut}^{\kern 0pt \circ}\kern -0.25pt}

\newcommand{\cJ}{\mathcal{J}}

\newcommand{\bC}{\mathbb{C}}

\newcommand{\bL}{\mathbb{L}}

\newcommand{\bR}{\mathbb{R}}

%-------------Nice typsetting

\newcommand{\Delt}{\Updelta_{\kern 0.05em 0}}
\newcommand{\WDelt}{W_{\kern -0.1em \Updelta}}
\newcommand{\WDeltaff}{W_{\kern -0.1em \Updelta_{\aff}}}
\newcommand{\Wkern}[1]{W_{\kern -0.1em #1}\kern 0.05em}
\newcommand{\wo}[1]{w_{\kern -0.075em #1}}
\newcommand{\wop}[1]{w^{\phantom J}_{\kern -0.1em #1}}
\newcommand{\GammaJ}{\Upgamma_{\kern -0.05em J}}
\newcommand{\GammaS}{\Upgamma_{\kern -0.05em \cJ}}

\newcommand{\xlup}[1]{{}^{#1}\kern -0.15em x}
\newcommand{\xlupmax}[1]{{}^{#1}\kern -0.25em x}
\newcommand{\iDelta}{\iota_{\kern -0.075em \Updelta}}
\newcommand{\Upphisub}[1]{\Upphi_{\kern -0.1em #1}}
\newcommand{\aff}{\operatorname{\mathsf{aff}}\nolimits}

\newcommand{\boldnu}{\boldsymbol\nu}

\newcommand{\ul}[1]{\underline{#1}}

\begin{document}

\title[]{Silting and Tilting for Weakly Symmetric Algebras}

\author[]{Jenny August}
\address{Max Planck Institute for Mathematics, Vivatsgasse 7, 53111 Bonn, Germany}
\email{jennyaugust@mpim-bonn.mpg.de}

\author[]{Alex Dugas}
\address{Dept.\ of Mathematics, University of the Pacific,
  Stockton, California 95211}
\email{adugas@pacific.edu}

\begin{abstract} 

If A is a finite-dimensional symmetric algebra, then it is well-known that the only silting complexes in $\Kb(\proj A)$ are the tilting complexes.  In this note we investigate to what extent the same can be said for weakly symmetric algebras.  On one hand, we show that this holds for all tilting-discrete weakly symmetric algebras.  In particular, a tilting-discrete weakly symmetric algebra is also silting-discrete.  On the other hand, we also construct an example of a weakly symmetric algebra with silting complexes that are not tilting.
\end{abstract}

%\subjclass[2010]{Primary~; Secondary~}
%\keywords{Stability conditions, contraction algebra, flopping contraction, $K(\uppi,1)$.}
%\thanks{J.A. was supported by the Carnegie Trust for the Universities of Scotland, and M.W. was supported by EPSRC grants~EP/R009325/1 and EP/R034826/1.}
\maketitle{}

\section{Introduction}

For a finite-dimensional $k$-algebra $A$, the tilting complexes play a central role in the category $\Kb(\proj A)$ of perfect complexes. One of the main tools used in their study is \emph{mutation}, but to get a well-behaved mutation, one is led to consider the weaker notion of silting complexes instead. While the silting theory of $A$ can be quite complicated in general, the notion of \emph{silting-discreteness} was introduced by Aihara \cite{aihara} as a strong finiteness property. This can make it possible to describe all the silting complexes over $A$ and their behavior under mutation. For example, under this condition it is well known that $A$ is \emph{silting connected} \cite{aihara} i.e.\ any two silting complexes of $A$ can be connected by a sequence of mutations. 

The silting-discreteness property also has particularly nice implications on the Bridgeland stability manifold associated to $\Db(\mod A)$ \cite{AMY, PSZ} -- a topological invariant related to the $t$-structures in this category. In particular, Pauksztello--Saorin--Zvonareva show that for a silting-discrete algebra, the bounded  $t$-structures in $\Db(\mod A)$ are in bijection with basic silting complexes in $\Kb(\proj A)$ \cite{PSZ, KY}. Moreover, they use this to show the stability manifold is contractible in this case, something which is often very difficult to determine in more geometric settings.

With this in mind, the results in this article were broadly motivated by the question of which finite-dimensional algebras are silting-discrete. Because of their connection with derived equivalences, it is often easier to control the tilting complexes of an algebra, rather than all the silting complexes. For example, Aihara--Mizuno \cite{AM} use the associated equivalences to show that the preprojective algebras of Dynkin type are tilting-discrete, but it remains an open question whether they are all silting-discrete. In particular, the easiest settings to establish silting-discreteness will be when the notions of silting and tilting (and hence also silting-discrete and tilting-discrete) coincide. This is well-known for symmetric algebras, and so we asked whether the same is true for weakly symmetric algebras. As we show below, if a weakly symmetric algebra is tilting-discrete, then it must also be silting-discrete, and in this case all silting complexes are tilting. In particular, this applies to the weakly symmetric preprojective algebras (those of type $D_{2n}$, $E_7$ and $E_8$), a result which we have since learned was already known to Aihara \cite{aihara2}, although the proof does not explicitly appear in \cite{AM}.  Furthermore, after writing we became aware of work of Adachi and Kase \cite{AK}, which independently proves both of these results as a consequence of a more general theory of $\nu$-stable silting. 
  
However, we additionally return to the question of whether every silting complex over a weakly symmetric algebra is tilting, and we show that the answer is negative in general.  We achieve this by constructing examples of weakly symmetric algebras with silting complexes that are not tilting.  These examples are modifications of the examples of silting-disconnected algebras in \cite{dugas}, and in fact provide further examples of algebras with this property.

\section{Preliminaries}
We let $A$ be a basic finite-dimensional algebra over an algebraically closed field $k$ with $n$ isomorphism classes of simple (right) modules.  We write $e_1, \ldots, e_n$ for a complete set of pairwise orthogonal primitive idempotents for $A$, and write $P_i = e_iA$ for the indecomposable projective right $A$-modules.  We primarily work with right $A$-modules and use $\mod A$ for the category of finitely generated right $A$-modules, $\Db(\mod A)$ for the bounded derived category and $\Kb(\proj A)$ for the homotopy category of perfect complexes over $A$.

\subsection{Twisted modules}

 Let $\sigma$ be a $k$-algebra automorphism of $A$, acting on the left. For any right $A$-module $M$, we define the twisted module $M_{\sigma}$ to be $M$ as a $k$-vector space with the right action of $A$ given by $m \cdot a = m \sigma(a)$ for all $m \in M$ and $a \in A$.  Similarly, for a left $A$-module $N$, we can define the twisted module ${}_\sigma N$ as $N$ but with $A$-action given by $a \cdot m = \sigma(a) m$ for all $a \in A$ and $m \in N$.  Observe that we have natural isomorphisms $M_\sigma \cong M \otimes_A A_{\sigma}$ and ${}_{\sigma} N \cong {}_\sigma A \otimes_A N$ for all right (resp.\ left) $A$-modules $M$ (resp.\ $N$).  Thus $\sigma$ induces an automorphism $\sigma^* \colonequals -\otimes_A A_{\sigma}$ of the category $\mod A$. This action restricts to an automorphism of $\proj A$ and hence also induces automorphisms of $\Kb(\proj A)$ and $\Db(\mod A)$.  

\subsection{Nakayama Automorphism}

Writing $D \colonequals \Hom_k(-,k)$ for the standard $k$-duality between left and right $A$-modules, the \emph{Nakayama functor} $\boldsymbol\nu \colonequals D\Hom_A(-,A)$ is a right exact functor isomorphic to $- \otimes_A DA$. It induces an equivalence 
$\proj A \xrightarrow{\sim} \mathrm{inj} A$
whose quasi-inverse is $\boldnu^{-1} \colonequals \Hom_A(DA,-)$.\\

Recall that $A$ is self-injective if and only if there is an isomorphism of right (or left) modules $A \xrightarrow{\sim} DA$. In this case, all projective modules are injective and vice versa, and there always exists an algebra automorphism $\nu \colon A \to A$ such that there is a isomorphism of $A$-bimodules 
\begin{align*}
\varphi \colon A_\nu \to DA. 
\end{align*}

Note that $\nu$ is unique up to inner automorphism, and we call $\nu$ the \emph{Nakayama automorphism} of $A$ since $\nu^* \colonequals - \otimes_A A_\nu $ coincides with the Nakayama functor $\boldnu$. It is well known that $A$ is symmetric if and only if $\nu$ is inner which is if and only if $\boldnu$ is isomorphic to the identity functor. Note that $\boldnu(P_i) \cong I_i$ for all finite-dimensional algebras, but if $A$ is self-injective, there exists a permutation $\pi$ such that, for all $i$,
\begin{align*}
P_i \cong I_{\pi(i)}, \quad \text{or equivalently}, \quad \boldnu P_i \cong P_{\pi^{-1}(i)}.
\end{align*}
This permutation $\pi$ is known as the \emph{Nakayama permutation} of $A$.
\begin{dfn}
An algebra is \emph{weakly symmetric} if the Nakayama permutation is the identity i.e. $P_i \cong \boldnu P_i$ for all $i$. Or equivalently, if $\nu(e_i) \cong e_i$ for all $i$.
\end{dfn}
Note that the weakly symmetric property is strictly weaker than being symmetric.
\begin{thm}\cite[4.8]{BBK} \label{preproj self injective}
The preprojective algebras of ADE Dynkin type are self-injective. They are weakly symmetric if the Dynkin type is $D_{2n}, E_7,$ or $E_8$ but these are not symmetric unless $\mathrm{char} \ k =2$.
\end{thm}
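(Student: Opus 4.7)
The plan is to follow the strategy of Brenner--Butler--King \cite{BBK}. First, I would realise the preprojective algebra $\Pi = \Pi(Q)$ as the path algebra $k\overline{Q}/I$ of the double quiver $\overline{Q}$ modulo the preprojective relations, equipped with the natural grading by path length. For ADE type, $\Pi$ is finite-dimensional (Gelfand--Ponomarev) with top degree $h-2$, where $h$ is the Coxeter number, and each graded piece admits an explicit description in terms of the underlying root system.

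Next, for self-injectivity, I would construct a non-degenerate graded bilinear pairing
\[\Pi_d \otimes_k \Pi_{h-2-d} \to k\]
compatible with the idempotents, exhibiting $\Pi$ as a graded Frobenius algebra. Such a pairing can be produced either via the almost Koszul duality between $\Pi$ and its quadratic dual, or by a direct top-degree trace form. The resulting Frobenius structure yields self-injectivity and pins down the Nakayama permutation $\pi$: the socle of $e_i\Pi$ is one-dimensional in degree $h-2$ and lies in $e_i\Pi e_{\pi(i)}$, from which one computes that $\pi$ is the diagram automorphism induced by $-w_0$, with $w_0$ the longest element of the Weyl group. Within ADE this automorphism is trivial precisely for the types $A_1$, $D_{2n}$, $E_7$, $E_8$, yielding weak symmetry in those cases and its failure otherwise.

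For the last part, one must show that $\Pi(Q)$ fails to be symmetric in these types unless $\mathrm{char}\, k = 2$. The main obstacle is to pin down the Nakayama automorphism $\nu$ explicitly, not just its induced permutation on idempotents. A direct computation using the Frobenius pairing shows that $\nu$ fixes each $e_i$ and acts on each arrow $a$ of $\overline{Q}$ by a sign $\epsilon_a \in \{\pm 1\}$ dictated by a bipartition of the Dynkin diagram. The crux, and the hardest step, is to verify that this sign pattern cannot be realised by conjugation by any unit of $\Pi$ unless $-1 = 1$ in $k$; once this combinatorial obstruction is in place, the characteristic $2$ case follows automatically, as all the signs $\epsilon_a$ collapse to $+1$.
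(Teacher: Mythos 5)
The paper does not prove this theorem; it simply cites \cite[4.8]{BBK}, so there is no in-paper argument to compare against. Your sketch does follow the general strategy of that reference: the graded Frobenius (almost Koszul) structure on $\Pi$, reading off the Nakayama permutation from the top-degree socle of each $e_i\Pi$ and identifying it with the diagram automorphism induced by $-w_0$, and then exhibiting the Nakayama automorphism $\nu$ as a sign twist whose class modulo inner automorphisms is nontrivial in characteristic $\neq 2$.

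The one place that needs more care is the description of the sign pattern. Saying $\epsilon_a$ is ``dictated by a bipartition of the Dynkin diagram'' is ambiguous and, under the most natural reading, actually undercuts the argument: ADE diagrams are bipartite, so every arrow of $\overline{Q}$ joins opposite colors, and any sign rule depending only on the endpoints assigns the \emph{same} sign to $a$ and its reverse $a^*$. The resulting automorphism $a \mapsto -a$ is then inner, realized by conjugation by $u = \sum_{\text{black}}e_i - \sum_{\text{white}}e_i$, so it would prove nothing. What genuinely obstructs innerness is that the sign separates $a \in Q_1$ from its dual $a^* \in \overline{Q}_1\setminus Q_1$: these join the same two vertices but receive opposite signs. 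Since for any unit $u$ the degree-one component of $uau^{-1}$ equals $u_0au_0^{-1}$ with $u_0 = \sum_i\lambda_i e_i$ (because $\Pi_0 \cong k^n$ and $\Pi_{\geq 1}$ is nilpotent), and conjugation by $u_0$ scales every arrow $i \to j$ by the same factor $\lambda_i\lambda_j^{-1}$ whether it is $a$ or $a^*$, imposing opposite signs forces $1 = -1$. You should therefore state the sign rule in terms of the chosen orientation $Q \subset \overline{Q}$ (equivalently, the bipartition \emph{together with} a distinguished direction), and include the short graded reduction to degree-zero units explicitly; as written the reader cannot tell that the obstruction survives conjugation by units with higher-degree terms.
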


\subsection{Nakayama and Tilting}
Recall that a complex $T \in \Kb(\proj A)$ is called tilting (resp.\ silting) if 
\begin{enumerate}
\item $\Hom_{\Kb(\proj A)}(T,T[n])=0$ for all $n \neq 0$ (resp.\ for all $n >0$);
\item the smallest full triangulated subcategory of $\Kb(\proj A)$ containing $T$ and closed under forming direct summands is $\Kb(\proj A)$.
\end{enumerate}
We will write $\tilt A$ (resp.\ $\silt A$) for the set of isomorphism classes of basic tilting (resp.\ silting) complexes in $\Kb(\proj A)$. If $A$ is self-injective, the Nakayama functor restricts to an equivalence $\boldnu \colon\proj A \to \proj A$ and hence there is an induced equivalence
\begin{align*}
\boldnu \colon \Kb(\proj A) \to \Kb(\proj A). 
\end{align*}
It follows that if $T$ is a tilting (resp.\ silting) complex, then so is $\boldnu T$. 

\begin{thm}\cite[A.4]{aihara} \label{nakayama stable iff tilting}
If $A$ is self-injective then a basic silting complex $T$ is tilting if and only if $\boldnu T \cong T$.
\end{thm}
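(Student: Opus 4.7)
The plan is to deduce both implications from the fact that, for a finite-dimensional self-injective algebra $A$, the Nakayama functor $\boldnu$ serves as a Serre functor on $\Kb(\proj A)$: there is a natural isomorphism
$$\Hom_{\Kb(\proj A)}(X, Y) \cong D\Hom_{\Kb(\proj A)}(Y, \boldnu X)$$
for all $X, Y \in \Kb(\proj A)$. Specialising to $X = T$ and $Y = T[-n]$ yields the key identity
$$\Hom(T, T[-n]) \cong D\Hom(T, \boldnu T[n]),$$
which will drive both directions.

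For the easier implication, suppose $\boldnu T \cong T$. The identity above becomes $\Hom(T, T[-n]) \cong D\Hom(T, T[n])$, whose right-hand side vanishes for $n > 0$ by the silting hypothesis. Hence $\Hom(T, T[m]) = 0$ for all $m \neq 0$, and $T$ is tilting.

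For the harder implication, suppose $T$ is tilting. Then $\Hom(T, T[-n]) = 0$ for $n \neq 0$, and the identity forces $\Hom(T, \boldnu T[n]) = 0$ for all $n \neq 0$. By Rickard's derived Morita theorem, the tilting complex $T$ induces a triangle equivalence $F = \RHom_A(T, -) \colon \Db(\mod A) \simto \Db(\mod B)$ with $B = \End_{\Kb(\proj A)}(T)$, carrying $T$ to $B$ and restricting to an equivalence $\Kb(\proj A) \simto \Kb(\proj B)$. The cohomology of $F(\boldnu T)$ in degree $i$ is $\Hom(T, \boldnu T[i])$, so $F(\boldnu T)$ is concentrated in degree zero, and being perfect it is a finitely generated projective right $B$-module. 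Pulling back through $F^{-1}$ places $\boldnu T$ in $\add(T)$. Since $T$ and $\boldnu T$ are both basic silting complexes with the same number of indecomposable summands (the rank of $K_0(A)$), it follows that $\add(T) = \add(\boldnu T)$, and hence $\boldnu T \cong T$.

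The main step that needs justification is the Serre duality formula on $\Kb(\proj A)$, which is standard for finite-dimensional self-injective algebras but is not explicitly recalled in the preliminaries above. Once it is in place, both directions reduce to short applications of the silting hypothesis and of Rickard's theorem.
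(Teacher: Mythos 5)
The paper cites this result to Aihara's appendix rather than proving it, so there is no in-paper proof to compare against; the following assesses the proposal on its own terms and against the standard argument.

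Your ``easy'' direction is correct: if $\boldnu T \cong T$ and $T$ is silting, then Serre duality on $\Kb(\proj A)$ gives $\Hom(T,T[-n]) \cong D\Hom(T,\boldnu T[n]) \cong D\Hom(T,T[n]) = 0$ for $n>0$, so $T$ is tilting. This is exactly the standard computation.

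The ``hard'' direction has a genuine gap at the step where you conclude that $F(\boldnu T)$, being a perfect complex with cohomology concentrated in degree zero, is a finitely generated projective $B$-module. Over a general finite-dimensional algebra $B$, a perfect complex with cohomology concentrated in degree $0$ is merely quasi-isomorphic to a module of \emph{finite projective dimension}; such a module need not be projective. For instance, over the path algebra of $1\to 2$, the two-term complex $P_2 \to P_1$ is perfect with cohomology $S_1$ in degree $0$, yet $S_1$ is not projective. What rescues your argument is that $B=\End_{\Kb(\proj A)}(T)$ is itself self-injective (a nontrivial theorem on derived invariance of self-injectivity, due to Rickard and Al-Nofayee), and over a self-injective algebra a module of finite projective dimension is automatically projective. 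You would need to cite this fact explicitly; without it the inference does not follow. Note also that one cannot instead try to establish $\boldnu T \geq T$ and invoke antisymmetry of the silting order, since Serre duality relates $\Hom(\boldnu T, T[i])$ to $\Hom(T, \boldnu^2 T[-i])$, which is not controlled by the hypotheses.

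Once the self-injectivity of $B$ is granted, your argument does work, but at that point the usual proof is shorter: the standard derived equivalence $F$ commutes with the Nakayama functors of $A$ and $B$ (Rickard, \cite[5.2]{rickard}, invoked elsewhere in this paper), so $F(\boldnu_A T) \cong \boldnu_B(FT) = \boldnu_B B \cong B$ (the last isomorphism using $B$ self-injective), and applying $F^{-1}$ gives $\boldnu_A T \cong T$ directly, without the cohomology computation or the summand count. Your approach is a workable variant, but it consumes the same nontrivial input (self-injectivity of $B$) while doing more work; the missing citation is the only real defect.
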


Recall that when $A$ is symmetric, $\boldnu \cong \mathrm{id}$, and hence a direct corollary of this result is the well-known fact that all silting complexes over a symmetric algebra are tilting complexes.

%%%%%%%%%%%%%%%%%%%%%%%%%%%%%%%%%%%%%%%%%%%%

\subsection{Silting Mutation} 
To create new silting complexes from a given one, Aihara--Iyama introduced the notion of mutation \cite{AI}.

\begin{dfn}
Suppose that $T = X \oplus Y \in \Kb(\proj A)$ is a basic silting complex. Then consider a triangle 
\begin{align*}
X \xrightarrow{f} Y' \xrightarrow{g} X' \to X[1]
\end{align*}
where $f$ is a left $\add(Y)$-approximation of $X$. Then $\mu_X(T) \colonequals X' \oplus Y$ is a silting complex called the  left mutation of $T$ with respect to $X$. There is a dual notion of right mutation. Such mutations are called \emph{irreducible} if $X$ is indecomposable.
\end{dfn} 
For any finite dimensional algebra $A$, we may view the algebra as a complex centred in degree zero, and this will always be a tilting complex. An algebra $A$ is called \emph{silting connected} (resp.\ \emph{weakly silting connected}) if all basic silting complexes in $\Kb(\proj A)$ can be obtained from $A$ by a sequence of irreducible (resp.\ not necessarily irreducible) mutations, left or right at each stage. Note that not all algebras are weakly silting connected \cite{dugas}.

If $A$ is self-injective, we say that a complex $X \in \Kb(\proj A)$ is \emph{Nakayama stable} if \mbox{$\boldnu(X) \cong X$}. In other words, Theorem \ref{nakayama stable iff tilting} says that a silting complex $T$ is tilting if and only if it is Nakayama stable. We further call $T$ \emph{strongly Nakayama stable} if each indecomposable summand of $T$ is Nakayama stable.
\begin{prop} \label{strong nakayama stable} \cite[2.1]{dugas}
If $A$ is self-injective and $T$ is a strongly Nakayama stable tilting complex, then any (not necessarily irreducible) mutation of $T$ is also strongly Nakayama stable. 
\end{prop}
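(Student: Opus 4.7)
The plan is to apply the Nakayama functor $\boldnu$ to the defining mutation triangle and combine the summand-wise stability of $X$ and $Y$ with the uniqueness of minimal left approximations. Write $\mu_X(T)=X'\oplus Y$ where $X'$ is the cone of a minimal left $\add(Y)$-approximation $f\colon X\to Y'$, so we have the triangle
\[
X\xrightarrow{f}Y'\xrightarrow{g}X'\to X[1].
\]
Applying the autoequivalence $\boldnu$ produces the triangle $\boldnu X\to \boldnu Y'\to \boldnu X'\to \boldnu X[1]$. Strong Nakayama stability of $T$ supplies summand-wise isomorphisms $\alpha\colon\boldnu X\simto X$ and $\beta\colon\boldnu Y'\simto Y'$ (the latter since $Y'\in\add Y$ and each $Y_i$ is Nakayama stable), which rewrite the $\boldnu$-triangle as
\[
X\xrightarrow{\tilde f}Y'\to \boldnu X'\to X[1],\qquad \tilde f=\beta\,\boldnu(f)\,\alpha^{-1},
\]
where $\tilde f$ is again a minimal left $\add(Y)$-approximation. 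By the uniqueness of minimal left approximations up to an automorphism of the target, there is $\psi\colon Y'\simto Y'$ with $\psi f=\tilde f$, and TR3 combined with the triangulated 5-lemma then gives an isomorphism $\varphi\colon X'\simto \boldnu X'$. Together with the summand-wise identification $\boldnu Y\cong Y$, this yields $\boldnu\,\mu_X(T)\cong \mu_X(T)$.

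To upgrade this to strong Nakayama stability, observe that $\mu_X(T)$ is basic and $\boldnu$ fixes each $Y_i$ summand-wise; since no $X'_j$ can be isomorphic to any $Y_i$ in a basic silting complex, the action of $\boldnu$ restricts to a permutation of the isomorphism classes $\{X'_j\}$. The main obstacle is to show this permutation is the identity. In the irreducible case (where $X$ is indecomposable), $X'$ is forced to be indecomposable too, because basic silting complexes over $A$ all have the same number of indecomposable summands, so the permutation is automatically trivial and the argument above concludes. For the general case of simultaneous mutation, I would exploit the freedom to pick $\alpha$ and $\beta$ block-diagonally with respect to the Krull-Schmidt decompositions of $X$ and $Y'$, which forces $\psi$ into a corresponding block-diagonal form, and then extract summand-wise stability for $X'$ from a careful analysis of the induced cone map $\varphi$; alternatively, one could attempt to connect $T$ and $\mu_X(T)$ through a sequence of irreducible mutations within the silting poset and iterate the base case.
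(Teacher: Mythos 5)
Your core strategy---apply the autoequivalence $\boldnu$ to the mutation triangle, transport back along the stability isomorphisms for $X$ and $Y'$, then invoke uniqueness of minimal left approximations together with TR3 and the triangulated five-lemma to get $\boldnu X' \cong X'$---is correct, and your handling of the irreducible case is complete and matches what one expects. The general case, however, is not actually proved, and both routes you sketch for it have problems.

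On the ``block-diagonal'' route: choosing $\alpha$ and $\beta$ block-diagonal does \emph{not} force $\psi$ to be block-diagonal. The condition $\psi f=\tilde f$ with $f,\tilde f$ block-diagonal only pins down the diagonal blocks of $\psi$; an off-diagonal block $\psi_{jk}$ need only satisfy $\psi_{jk}f_k=0$ and can be nonzero. What \emph{is} available is a choice of a block-diagonal $\psi$, but to make it you must first know that the minimal left $\add(Y)$-approximation of $X=\bigoplus_i X_i$ decomposes as $f=\bigoplus_i f_i$ with each $f_i\colon X_i\to Y_i'$ itself a minimal left $\add(Y)$-approximation of the indecomposable $X_i$; you never invoke this (true) fact. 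Granting it, one gets $X'\cong\bigoplus_i \Cone{f_i}$ and can run your TR3 argument on each $f_i$ to conclude $\boldnu\,\Cone{f_i}\cong\Cone{f_i}$. But this still falls short of \emph{strong} Nakayama stability unless each $\Cone{f_i}$ is indecomposable, and you only supply the counting argument for that in the irreducible case. (The general version: $\Cone{f_i}\neq 0$ since $X_i\notin\add(Y)$ in the basic complex $T$, and $X'$ has exactly as many indecomposable summands as $X$ because $T$ and $\mu_X(T)$ are both basic silting complexes containing $Y$; so the $m$ nonzero complexes $\Cone{f_i}$ account for all $m$ indecomposable summands of $X'$ and hence are each indecomposable.) On the fallback route, connecting $T$ to $\mu_X(T)$ by irreducible mutations is not available: mutation at a decomposable $X$ does not factor through irreducible mutations in general, since the intermediate steps would use approximations with respect to larger subcategories than $\add(Y)$, and positing such a chain is essentially a silting-connectedness assumption---precisely the sort of statement this proposition is being used to prove. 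So the proposal identifies the right obstruction but leaves the decomposable case unresolved; the fix is the summand-wise decomposition of the minimal approximation plus the indecomposability count, applied uniformly rather than only in the irreducible case.
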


%%%%%%%%%%%%%%%%%%%%%%%%%%%%%%%%%%%%%%%%%%%%%%%%%%%%

\section{Tilting Theory for Weakly Symmetric Algebras}

In this section, we make some initial observations on the tilting theory of weakly symmetric algebras, before then placing an extra condition on the algebras, known as \emph{tilting-discreteness}, and showing that all silting complexes are tilting in this case. As before, we let $A$ be a basic finite-dimensional algebra over a field $k$ with $n$ isomorphism classes of simple modules.

\subsection{Initial Observations}

Since a basic silting complex $T \in \Kb(\proj A)$ has $n$ indecomposable summands by \cite[2.28]{AI}, we can write $T = \oplus_{i=1}^n T_i$ where each $T_i$ is indecomposable.  If $A$ is self-injective and $T$ is tilting, then Theorem \ref{nakayama stable iff tilting} shows $\boldnu$ must permute these summands. In this case, the associated standard derived equivalence $\Db(\mod A) \to \Db(\mod \End(T))$ maps the $T_i$ to the distinct indecomposable projective modules over $\End(T)$, and commutes with the Nakayama functors of the two algebras \cite[5.2]{rickard}.  Hence it follows that the permutation of the $T_i$ induced by $\boldnu$ will correspond with the Nakayama permutation of $\End(T)$.

\begin{prop} \label{weaklySymTilting} Let $A$ be weakly symmetric.  Then any tilting complex $T \in \Kb(\proj A)$ is strongly Nakayama stable. Consequently, any algebra derived equivalent to $A$ is also weakly symmetric.
\end{prop}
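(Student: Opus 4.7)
The plan is to invoke Theorem \ref{nakayama stable iff tilting} to write $T=\bigoplus_{i=1}^n T_i$ with $\boldnu$ permuting the indecomposable summands via some $\sigma \in S_n$, and then to use the weakly symmetric hypothesis to force $\sigma=\mathrm{id}$ through a Grothendieck group calculation. Concretely, since $T$ is tilting, Theorem \ref{nakayama stable iff tilting} gives $\boldnu T \cong T$, and by Krull--Schmidt there is a permutation $\sigma$ with $\boldnu T_i \cong T_{\sigma(i)}$.

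Next I would pass to $K_0(\Kb(\proj A))$, which is the free abelian group on $[P_1], \ldots, [P_n]$. The weakly symmetric assumption means $\boldnu P_j \cong P_j$ for every $j$, so $\boldnu$ acts as the identity on $K_0$. Expanding $[T_i] = \sum_j a_{ij}[P_j]$ and applying the induced map additively yields
\[
[T_{\sigma(i)}] \;=\; [\boldnu T_i] \;=\; \textstyle\sum_j a_{ij}[\boldnu P_j] \;=\; \sum_j a_{ij}[P_j] \;=\; [T_i]
\]
in $K_0(\Kb(\proj A))$.

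Since $T$ is tilting, Rickard's theorem provides a derived equivalence $\Db(\fmod \End(T)) \xrightarrow{\sim} \Db(\fmod A)$ sending the indecomposable projective $P_i^{\End(T)}$ to $T_i$; this equivalence induces an isomorphism on $K_0$, under which the $[P_i^{\End(T)}]$ form a $\bZ$-basis. Hence $\{[T_i]\}_{i=1}^n$ is a $\bZ$-basis of $K_0(\Kb(\proj A))$, and the equality $[T_{\sigma(i)}] = [T_i]$ forces $\sigma(i)=i$ for each $i$. Thus $\boldnu T_i \cong T_i$, which is strong Nakayama stability.

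For the consequence, recall that any algebra $B$ derived equivalent to $A$ is of the form $B = \End(T)$ for a tilting complex $T \in \Kb(\proj A)$, and is again self-injective since self-injectivity is a derived invariant. As noted in the paragraph preceding the statement, the Nakayama permutation of $B$ is precisely the permutation $\sigma$ induced by $\boldnu$ on the summands of $T$. Having shown $\sigma = \mathrm{id}$, we conclude $B$ is weakly symmetric. The only subtle points are identifying the bases of $K_0$ correctly and the compatibility of the Nakayama permutation of $\End(T)$ with the $\boldnu$-action on $T$, both of which are standard but worth stating carefully.
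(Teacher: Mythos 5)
Your proof is correct and takes essentially the same approach as the paper: both use that $\boldnu$ acts as the identity on $K_0(\Kb(\proj A))$ when $A$ is weakly symmetric, that $\{[T_i]\}$ is a $\bZ$-basis of $K_0$ coming from the derived equivalence $\Kb(\proj \End(T)) \simeq \Kb(\proj A)$, and that this forces the permutation of summands induced by $\boldnu$ to be trivial. The derivation of the second statement via the identification of the $\boldnu$-action on the $T_i$ with the Nakayama permutation of $\End(T)$ also matches the paper.
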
 

\begin{proof} 
 The Grothendieck group of the triangulated category $\Kb(\proj A)$ is a free abelian group with basis elements $[P_i]$ for each indecomposable projective $A$-module $P_i$.  Since a tilting complex $T= \oplus_{i=1}^n T_i \in \Kb(\proj A)$ with $B= \End(T)$ induces an equivalence of triangulated categories $\Kb(\proj B) \to \Kb(\proj A)$ taking $B$ to $T$, it induces an isomorphism of Grothendieck groups taking the natural basis over $B$ to $\{[T_i]\}_{i=1}^n$.  Thus the latter is a basis for the Grothendieck group of $\Kb(\proj A)$ (this is in fact true if $T$ is any silting complex by \cite[2.27]{AI}).  However, if $A$ is weakly symmetric, we have $\boldnu P_i \cong P_i$ for all $i$, and thus $\boldnu$ acts as the identity on the Grothendieck group.  Since $\boldnu$ permutes the $T_i$, if $\boldnu T_i \cong T_j$, then $[T_i] = [\boldnu T_i] = [T_j]$ in the Grothendieck group, which means that $T_i = T_j \cong \boldnu T_i$, as required.  The second statement of the proposition, now follows from the fact mentioned above that the action of $\boldnu$ on the $T_i$ induces the Nakayama permutation of $\End(T)$.
\end{proof}
Our next observation is the following direct corollary of Proposition \ref{strong nakayama stable}.
\begin{prop} \label{strong reachable complexes}
If $A$ is a weakly symmetric algebra, then all silting complexes reachable from $A$ via iterated mutation are strongly Nakayama stable tilting complexes. Moreover, their endomorphism algebras will all be weakly symmetric algebras.
\end{prop}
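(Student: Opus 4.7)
The plan is a straightforward induction on the number of mutations, combined with the two main results already in hand, namely Proposition \ref{strong nakayama stable} and Proposition \ref{weaklySymTilting}.

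First I would establish the base case. The algebra $A$, viewed as a stalk complex concentrated in degree zero, is tilting, and its indecomposable summands are exactly the $P_i$. Since $A$ is weakly symmetric, $\boldnu P_i \cong P_i$ for every $i$ by definition, so $A$ itself is a strongly Nakayama stable tilting complex.

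Next I would perform the inductive step. Suppose $T$ is a silting complex reachable from $A$ by $m$ iterated mutations; by the inductive hypothesis $T$ is a strongly Nakayama stable tilting complex. Applying Proposition \ref{strong nakayama stable} directly shows that any further mutation $T'$ of $T$ (irreducible or not) is again strongly Nakayama stable. In particular, writing $T' = \bigoplus_i T'_i$ with each $T'_i$ satisfying $\boldnu T'_i \cong T'_i$, summing these isomorphisms yields $\boldnu T' \cong T'$, so $T'$ is Nakayama stable and hence tilting by Theorem \ref{nakayama stable iff tilting}. This closes the induction and proves the first assertion.

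Finally, the statement about endomorphism algebras is immediate from Proposition \ref{weaklySymTilting}: every such $T$ is tilting and so induces a derived equivalence between $\End(T)$ and $A$, hence $\End(T)$ is weakly symmetric by the second clause of that proposition. There is no real obstacle here: the claim is essentially a bookkeeping exercise assembling Proposition \ref{strong nakayama stable}, Theorem \ref{nakayama stable iff tilting}, and Proposition \ref{weaklySymTilting}, with the only point requiring a moment's care being the passage from strong Nakayama stability of each summand to Nakayama stability of the whole complex, which is needed to invoke Theorem \ref{nakayama stable iff tilting} and conclude that the mutated silting complex is in fact tilting.
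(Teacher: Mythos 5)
Your proof is correct and follows essentially the same route as the paper: induct on the number of mutations using Proposition \ref{strong nakayama stable}, then derive weak symmetry of $\End(T)$ from the Nakayama permutation. Your one refinement over the paper's write-up is making explicit the small but necessary step that strong Nakayama stability of each summand gives $\boldnu T' \cong T'$, so that Theorem \ref{nakayama stable iff tilting} can be invoked to conclude the mutated silting complex is actually tilting — the paper leaves this implicit when it asserts that the mutation is again a ``strongly Nakayama stable tilting complex.''
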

\begin{proof}
Since $A$ is weakly symmetric, by definition we have $\boldnu P_i \cong P_i$ for all indecomposable projective modules and thus $A$ is a strongly Nakayama stable tilting complex. By Proposition \ref{strong nakayama stable}, any mutation of $A$ is again a strongly Nakayama stable tilting complex, and hence iterating this result shows any silting complex reachable from $A$ is a strongly Nakayama stable tilting complex. The second statement again uses the fact mentioned above that the action of $\boldnu$ on a tilting complex $T$ induces the Nakayama permutation of $\End(T)$.
\end{proof}

\begin{cor} \label{silting connected} 
If $A$ is weakly symmetric and weakly silting connected, then every silting complex for $A$ is a strongly Nakayama stable tilting complex.
\end{cor}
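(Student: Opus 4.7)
The plan is to observe that this corollary is essentially an immediate combination of the definition of weakly silting connected with Proposition \ref{strong reachable complexes}. The definition says that if $A$ is weakly silting connected, then every basic silting complex $T \in \Kb(\proj A)$ can be obtained from $A$ by a finite sequence of (not necessarily irreducible) left or right mutations. Proposition \ref{strong reachable complexes} then tells us that every such complex is a strongly Nakayama stable tilting complex.

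More concretely, I would first reduce to the basic case: any silting complex decomposes (up to multiplicity) into a basic silting complex, and the strongly Nakayama stable and tilting properties only depend on the indecomposable summands, so it suffices to treat the basic case. Then, given a basic silting complex $T$, the assumption that $A$ is weakly silting connected provides a finite chain $A = T^{(0)}, T^{(1)}, \dots, T^{(m)} = T$ where each $T^{(j+1)}$ is a (left or right) mutation of $T^{(j)}$. Since $A$ is weakly symmetric, $A$ itself is a strongly Nakayama stable tilting complex, and by Proposition \ref{strong nakayama stable} (via Proposition \ref{strong reachable complexes}), induction on $j$ shows that each $T^{(j)}$ is strongly Nakayama stable and tilting. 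In particular, $T = T^{(m)}$ has the desired property.

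There is essentially no obstacle here, since the hard work has already been done in Proposition \ref{strong reachable complexes}. The only point worth spelling out is that the inductive step really does apply at each stage: the mutation in the definition of weakly silting connected need not be irreducible, but Proposition \ref{strong nakayama stable} explicitly covers not necessarily irreducible mutations, so the induction goes through without restriction.
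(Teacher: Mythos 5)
Your proposal is correct and follows exactly the paper's argument: combine Proposition~\ref{strong reachable complexes} (every silting complex reachable from $A$ by mutation is a strongly Nakayama stable tilting complex) with the definition of weakly silting connected (every silting complex is so reachable). The extra details you add (reduction to the basic case, explicit induction along the mutation chain) are just unpacking what Proposition~\ref{strong reachable complexes} already encapsulates.
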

\begin{proof}
By Proposition \ref{strong reachable complexes}, since $A$ is weakly symmetric, all silting complexes reachable from $A$ by mutation are strongly Nakayama stable tilting complexes. Since $A$ is weakly silting connected, these are all the silting complexes of $A$ and hence the result follows. 
\end{proof}

\subsection{Tilting-discreteness}

Silting- and tilting-discreteness are notions which were developed by Aihara--Mizuno \cite{AM} using the partial order on silting complexes introduced by Aihara-Iyama \cite{AI}.

\begin{dfn}
If $T, S \in \Kb(\proj A)$ are two silting complexes, then we say $T \geq S$ if $\Hom_{\Kb(\proj A)}(T,S[n])=0$ for all $n > 0$.
\end{dfn}
This partial order is one of the key tools used when studying silting theory, as it is known to control mutation. It may also be used to define a certain subset of silting complexes, often studied because of their connections with $\tau$-tilting theory and cluster-tilting theory.

\begin{dfn}
A basic silting complex $T \in \Kb(\proj A)$ is called \emph{two-term} if $A \geq T \geq A[1]$ or equivalently, $T$ only has nonzero terms in degrees $0$ and $-1$.
\end{dfn}

\begin{prop} \label{two-term}
For a weakly symmetric algebra $A$, all two-term silting complexes are tilting.
\end{prop}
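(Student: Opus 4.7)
The plan is to apply Theorem \ref{nakayama stable iff tilting}, which reduces the task to showing that $\boldnu T \cong T$ for every basic two-term silting complex $T$. First, I would check that $\boldnu T$ is itself a two-term silting complex: the autoequivalence $\boldnu$ of $\Kb(\proj A)$ clearly preserves silting complexes, and since $A$ is weakly symmetric we have $\boldnu A \cong A$, so the defining inequalities $A \geq T \geq A[1]$ are preserved upon applying $\boldnu$.

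Next, write $T = \bigoplus_{i=1}^n T_i$ in indecomposables, so the indecomposable summands of $\boldnu T$ are precisely the $\boldnu T_i$. The weak-symmetry hypothesis means $\boldnu$ fixes each basis element $[P_i]$ of $K_0(\Kb(\proj A))$ and hence acts as the identity on the whole Grothendieck group; in particular $[\boldnu T_i] = [T_i]$ for every $i$. This is exactly the $K_0$-observation already exploited in the proof of Proposition \ref{weaklySymTilting}.

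The main step is then to invoke the well-known injectivity of the $g$-vector map on two-term silting complexes (due to Dehy--Keller, and also derivable from the Adachi--Iyama--Reiten bijection between two-term silting complexes in $\Kb(\proj A)$ and support $\tau$-tilting $A$-modules): an indecomposable summand of a two-term silting complex is determined up to isomorphism by its class in $K_0$. Applied to $T_i$ and $\boldnu T_i$, this yields $\boldnu T_i \cong T_i$ for each $i$, and hence $\boldnu T \cong T$, as required. Note that the linear independence of the $[T_i]$ (as a basis of $K_0$) guarantees that the implicit permutation matching $\boldnu T_i$ with $T_{\pi(i)}$ is forced to be the identity.

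The main obstacle is pinning down a clean reference for the $g$-vector injectivity stated directly for two-term silting complexes, rather than in its various guises (support $\tau$-tilting modules, cluster-tilting objects, etc.). A natural fallback is to transport $T$ across the Adachi--Iyama--Reiten bijection to the support $\tau$-tilting module $H^0(T)$, and use the analogous uniqueness statement that a support $\tau$-tilting module is determined by its $g$-vector in $K_0$.
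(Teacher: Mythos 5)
Your proposal is correct and follows essentially the same route as the paper: reduce to showing $\boldnu T \cong T$ via Theorem~\ref{nakayama stable iff tilting}, note that weak symmetry makes $\boldnu$ act trivially on $K_0$, and then invoke the injectivity of the $g$-vector map on two-term silting complexes (the paper cites \cite[6.5]{dij} for this, applied to the whole complex rather than summand-by-summand). Your summand-wise phrasing proves the marginally stronger claim that $T$ is strongly Nakayama stable, but the key lemma and overall argument are the same.
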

\begin{proof}
Suppose that $A = \bigoplus_{i=1}^n P_i$ and that $T$ is a two-term silting complex for $A$. Then, since the $[P_i]$ give a basis of the Grothendieck group of $\Kb(\proj A)$, we may write 
\begin{align*}
[T] = \bigoplus_{i=1}^n a_i[P_i]
\end{align*}
and, using the language of \cite{dij}, we say that the \emph{$g$-vector} of $T$ is $(a_1, \dots, a_n) \in \mathbb{Z}^n$. Now $\boldnu T$ is another two-term silting complex for $A$, and since $A$ is weakly symmetric ($\boldnu P_i \cong P_i$ for all $i$),  $\boldnu T$ must have the same $g$-vector. However, by \cite[6.5]{dij}, $g$-vectors completely determine two-term silting complexes and thus $T \cong \boldnu T$ and $T$ is tilting by Theorem \ref{nakayama stable iff tilting}.
\end{proof}
If an algebra $A$ has finitely many basic two-term silting complexes, the algebra is called \emph{$\tau$-tilting finite}. Aihara \cite{aihara} generalised this notion, with Aihara--Mizuno then developing it further.
\begin{dfn}\cite[2.4, 2.11]{AM}
A self-injective finite-dimensional algebra $A$ is called tilting-discrete (resp.\ silting-discrete) if the set
\begin{align*}
\{ T \in \tilt A \mid P \geq T \geq P[1]\} \quad \text{(resp. $\{ T \in \silt A \mid P \geq T \geq P[1]\}$)}
\end{align*}
is finite for any tilting (resp.\ silting) complex $P$ obtained from $A$ by iterated irreducible left mutation.
\end{dfn}
It is clear that silting-discrete implies tilting-discrete and if the algebra $A$ is symmetric, the two notions are equivalent. It is also known that silting-discrete implies silting connected \cite[3.9]{AM} and tilting-discrete implies tilting-connected \cite[5.14]{simpleminded}. However, if we only know an algebra is tilting-discrete, it is generally unknown whether the algebra is also silting-discrete/silting connected.
\begin{prop}\label{tdwsa} \emph{(Cf.\ \cite[Cor.\ 2.25]{AK})} 
If $A$ is a tilting-discrete weakly symmetric algebra, then $A$ is in fact silting-discrete and all silting complexes for A are tilting.
\end{prop}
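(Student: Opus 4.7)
The plan is to derive silting-discreteness from tilting-discreteness and then invoke the earlier results to conclude that all silting complexes are tilting. The key observation is that, in the weakly symmetric setting, each interval in the silting poset can be reduced to a comparison between two-term silting and two-term tilting complexes for a suitable endomorphism algebra, which coincide by Proposition \ref{two-term}.

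First I would unpack the definition of silting-discreteness: it suffices to show that for every silting complex $P$ obtained from $A$ by iterated irreducible left mutation, the interval $\{T \in \silt A : P \geq T \geq P[1]\}$ is finite. By Proposition \ref{strong reachable complexes}, any such $P$ is automatically a strongly Nakayama stable tilting complex, and its endomorphism algebra $\End(P)$ is again weakly symmetric. In particular, every irreducible left silting mutation along the way is simultaneously an irreducible left tilting mutation, so tilting-discreteness may be applied to $P$.

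Next I would pass through the standard derived equivalence $\Kb(\proj \End(P)) \simto \Kb(\proj A)$ induced by $P$. This gives an order-preserving bijection between the interval $\{T \in \silt A : P \geq T \geq P[1]\}$ and the set of two-term silting complexes of $\End(P)$, under which tilting complexes correspond to tilting complexes. Since $\End(P)$ is weakly symmetric, Proposition \ref{two-term} says that every two-term silting complex of $\End(P)$ is already tilting. Translating back, the silting interval $[P[1],P]$ and the tilting interval $\{T \in \tilt A : P \geq T \geq P[1]\}$ coincide, and the latter is finite by tilting-discreteness. Hence $A$ is silting-discrete.

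For the second statement, I would invoke \cite[3.9]{AM} to promote silting-discreteness to silting-connectedness, so that every silting complex of $A$ is reachable from $A$ by iterated irreducible mutation. Corollary \ref{silting connected} then immediately yields that every silting complex of $A$ is a strongly Nakayama stable tilting complex, completing the proof. I do not anticipate a serious obstacle here: the only step requiring mild care is the standard identification of two-term silting complexes of $\End(P)$ with the silting interval $[P[1],P]$ under the derived equivalence, but this is routine silting theory, and the rest follows by combining Propositions \ref{strong reachable complexes} and \ref{two-term} with Corollary \ref{silting connected}.
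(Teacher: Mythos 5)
Your proof is correct and follows essentially the same route as the paper's: reduce to a strongly Nakayama stable tilting complex $P$ via Proposition~\ref{strong reachable complexes}, transport the silting interval $[P[1],P]$ through the standard derived equivalence to two-term silting complexes over the weakly symmetric algebra $\End(P)$, apply Proposition~\ref{two-term} to identify these with the two-term tilting complexes, and finish with silting-connectedness and Corollary~\ref{silting connected}. The only cosmetic difference is that you make explicit the (correct) observation that $P$ qualifies as a tilting complex to which tilting-discreteness applies, a point the paper leaves implicit.
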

\begin{proof}
Suppose that $P$ is a silting complex obtained from $A$ by iterated irreducible left mutation. Then, since $A$ is weakly symmetric, $P$ is a strongly Nakayama stable tilting complex, and $B\colonequals \End_A(P)$ is a weakly symmetric algebra using Proposition \ref{strong reachable complexes}. Thus, there is a standard derived equivalence 
\begin{align*}
F \colon \Db(\mod A) &\to \Db(\mod B)\\
P &\mapsto B
\end{align*} 
and this preserves silting (resp.\ tilting) complexes and the silting order (see e.g.\ \cite[2.8]{August1}). In particular, $F$ induces a bijection
\begin{align}
\{ T \in \silt A \mid P \geq T \geq P[1]\} \leftrightarrow \{ S \in \silt B \mid B \geq S \geq B[1]\} \label{silt sets}
\end{align}
which further restricts to a bijection 
\begin{align}
\{ T \in \tilt A \mid P \geq T \geq P[1]\} \leftrightarrow \{ S \in \tilt B \mid B \geq S \geq B[1]\}. \label{tilt sets}
\end{align}
By the tilting-discreteness of $A$, the left hand side of \eqref{tilt sets} is finite and hence so is the right hand side. However, as $B$ is weakly symmetric, Proposition \ref{two-term} shows that 
\begin{align*}
\{ S \in \tilt B \mid B \geq S \geq B[1]\} = \{ S \in \silt B \mid B \geq S \geq B[1]\}
\end{align*}
and thus, both sides in \eqref{silt sets} are also finite, proving that $A$ is silting-discrete. Then by \mbox{\cite[3.9]{aihara}}, this implies $A$ is silting connected and thus all silting complexes can all be obtained from $A$ by iterated mutation. Using Proposition \ref{silting connected} this shows that all silting complexes are strongly Nakayama stable tilting complexes.
\end{proof}

\begin{cor} \emph{(Cf.\ \cite[Ex.\ 22]{aihara2}, \cite[Ex.\ 2.26]{AK})}
The preprojective algebras of Dynkin type $D_{2n}, E_7$ and $E_8$ are silting-discrete algebras, where every silting complex is a tilting complex.
\end{cor}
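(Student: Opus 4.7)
The plan is to combine the main result of the section, Proposition \ref{tdwsa}, with two facts already available in the literature. First, I would invoke Theorem \ref{preproj self injective} (i.e., \cite[4.8]{BBK}), which tells us that the preprojective algebras of type $D_{2n}$, $E_7$, and $E_8$ are weakly symmetric. Second, I would cite the result of Aihara--Mizuno \cite{AM}, which establishes that the preprojective algebras of Dynkin type are tilting-discrete; this is the input that does the heavy lifting and whose proof relies on a careful analysis of the mutation quiver together with the associated derived equivalences.

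With both facts in hand, the corollary is an immediate application of Proposition \ref{tdwsa}: a tilting-discrete weakly symmetric algebra is silting-discrete, and all of its silting complexes are tilting. So the proof amounts to verifying the hypotheses of Proposition \ref{tdwsa} and appealing to it.

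Since all the substantive work has been deferred to \cite{BBK}, \cite{AM}, and Proposition \ref{tdwsa}, there is no real obstacle here; the only potentially delicate point is conceptual rather than technical, namely the remark the authors make in the introduction that tilting-discreteness of these preprojective algebras is not explicitly recorded in \cite{AM} but is known to Aihara \cite{aihara2}. If one wished to avoid that citation, the natural alternative would be to reprove tilting-discreteness directly from the mutation theory used in \cite{AM}, but for the purposes of this corollary citing \cite{AM} (or \cite{aihara2}) is sufficient.
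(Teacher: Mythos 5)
Your proof is correct and follows exactly the same route as the paper: weak symmetry from \cite[4.8]{BBK}, tilting-discreteness from \cite[5.1]{AM}, and then Proposition~\ref{tdwsa}. One small clarification: the paper does cite \cite[5.1]{AM} directly for tilting-discreteness, so that fact \emph{is} explicitly recorded there; the remark in the introduction about Aihara \cite{aihara2} concerns the stronger conclusion that every silting complex is tilting (equivalently, silting-discreteness), not the tilting-discreteness input itself.
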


\begin{proof}
By Theorem \ref{preproj self injective}, these algebras are weakly symmetric and \cite[5.1]{AM} shows that they are tilting-discrete. The result then follows directly from Proposition \ref{tdwsa}. 
\end{proof}
One application of silting-discreteness is in the study of Bridgeland stability. Given a triangulated category, in this case the bounded derived category of our finite dimensional algebra, Bridgeland stability constructs a complex manifold associated to this category. If $A$ is a finite dimensional silting-discrete algebra, then \cite{PSZ} show that this manifold will be contractible, and combining this with Proposition \ref{tdwsa} immediately gives the following. 
\begin{cor}
If $A$ is a finite dimensional weakly symmetric tilting-discrete algebra, then the Bridgeland stability manifold of $\Db(\mod A)$ is contractible.
\end{cor}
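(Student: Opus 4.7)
The plan is essentially to chain together two results: Proposition \ref{tdwsa} to upgrade the hypothesis from tilting-discrete to silting-discrete, and then the main theorem of \cite{PSZ} on Bridgeland stability manifolds of silting-discrete algebras.

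More precisely, I would first invoke Proposition \ref{tdwsa}: since $A$ is weakly symmetric and tilting-discrete, it is silting-discrete. This is the substantive input, but it has already been established just above, so at the level of writing this corollary there is nothing new to do here. Once silting-discreteness is in hand, I would cite the theorem of Pauksztello--Saorin--Zvonareva \cite{PSZ}, which asserts that the Bridgeland stability manifold associated to $\Db(\mod A)$ of a finite-dimensional silting-discrete algebra is contractible, and apply it directly to our $A$ to conclude.

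There is genuinely no main obstacle; the corollary is a two-line consequence. The only thing to be careful about is making sure the statement of \cite{PSZ} is being applied in precisely the generality needed (finite-dimensional algebra over a field, silting-discrete in the sense of Aihara--Mizuno used in Proposition \ref{tdwsa}), which it is. In terms of presentation, I would keep the proof to a single sentence or two, mirroring the style of the preceding corollary on preprojective algebras: note that Proposition \ref{tdwsa} gives silting-discreteness of $A$, and then contractibility of the stability manifold is immediate from \cite{PSZ}.
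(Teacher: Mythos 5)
Your proof is correct and matches the paper's argument exactly: the paper's proof is the one-liner ``This follows directly from Proposition \ref{tdwsa} and \cite{PSZ},'' which is precisely the two-step chain you describe.
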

\begin{proof}
This follows directly from Proposition \ref{tdwsa} and \cite{PSZ}.
\end{proof}

\section{Examples}

We now give examples of weakly symmetric algebras with silting complexes that are not tilting.  The examples are based on those in \cite{dugas}, so we begin by reviewing the necessary details from that work.

We fix an even integer $n \geq 4$ and let $A$ be the path algebra of the quiver
$$\xymatrix{1 \ar[r]<0.5ex>^x \ar[r]<-0.5ex>_y & 2 \ar[r]<0.5ex>^x \ar[r]<-0.5ex>_y & \cdots \ar[r]<0.5ex>^x \ar[r]<-0.5ex>_y & n}$$
modulo the relations $x^2=y^2=0$.  We write $e_i$ for the primitive idempotent of $A$ corresponding to vertex $i$ (for $1 \leq i \leq n$).
As $A$ has finite global dimension, we can identify $\Kb(\proj A)$ with $\Db(\mod A)$, and we write $\mathbb{S} := -\otimes^{\bL}_A DA$ for the Serre functor on this category.

We let $\sigma \in \Aut_k(A)$ be the order two automorphism induced by the automorphism of $Q$ that fixes each vertex and swaps each pair of $x$ and $y$ arrows.  We write $\sigma^*$ for the induced automorphisms on the categories $\mod A$, $\Kb(\proj A)$, or $\Db(\mod A)$ depending on context.  %We say that an object $M$ in one of these categories is $\sigma^*$-invariant if $\sigma^*X \cong X$ for each indecomposable summand $X$ of $M$.  Clearly we have $\sigma(e_i) = e_i$ for all $i$, and thus each indecomposable projective $P_i$ is $\sigma^*$-invariant.  
We set $E = e_1A/e_1yA$, which is a uniserial module of length $n$, and note that $\sigma^* E \cong e_1A/e_1xA \ncong E$.
  %When we consider $E \in D^b(\mod A) \approx K^b(\proj A)$ we may identify $E$ with its minimal projective resolution 
%$$P_E = 0 \to P_n \stackrel{y\cdot}{\to} P_{n-1} \stackrel{y\cdot}{\to} \cdots \to P_2 \stackrel{y\cdot}{\to} P_1 \to 0,$$
%which has $P_1$ in degree $0$.

\begin{prop} \cite[4.1]{dugas} \label{prop:ESpherical} $E$ and $\sigma^* E$ are Hom-orthogonal $(n-1)$-spherical objects in $\Db(\mod A)$.  
\end{prop}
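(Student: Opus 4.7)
The plan is to compute all required Hom spaces and the Serre-duality identification directly from an explicit minimal projective resolution of $E$, and then transfer the statement for $\sigma^*E$ using the $\sigma$-symmetry.

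The first step is to write down the minimal projective resolution of $E$. Since $E = e_1A/e_1yA$, the kernel of $P_1 \twoheadrightarrow E$ is the uniserial submodule $e_1yA \subset P_1$, which is isomorphic (via $e_2 \mapsto y$) to $e_2A/e_2yA$. Iterating this identification yields
\[ 0 \longrightarrow P_n \xrightarrow{\,\cdot y\,} P_{n-1} \xrightarrow{\,\cdot y\,} \cdots \xrightarrow{\,\cdot y\,} P_1 \longrightarrow E \longrightarrow 0, \]
each differential being right multiplication by the arrow $y$; in particular $\projdim(E) = n-1$.

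Applying $\Hom_A(-,M)$ for $M=E$ and $M=\sigma^*E$ gives complexes whose terms $\Hom_A(P_j,M)=e_jM$ are one-dimensional (both modules being uniserial with composition factors $S_1,\dots,S_n$, each appearing once). A direct computation shows that the induced differential, right multiplication by $y$, vanishes precisely when $j$ is odd for $M=E$, and precisely when $j$ is even for $M=\sigma^*E$. Since $n$ is even, the complex computing $\Ext^*(E,E)$ alternates between zero and isomorphism differentials so that cohomology survives only in degrees $0$ and $n-1$, each of dimension one; this gives $\End(E)=k$, $\Ext^{n-1}(E,E)=k$, and $\Ext^i(E,E)=0$ otherwise. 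The opposite alternation in the case $M=\sigma^*E$ makes every position a boundary, so the complex is exact and $\Hom^*(E,\sigma^*E)=0$.

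The delicate remaining point is the Serre-stability $\mathbb{S}(E) \cong E[n-1]$. Applying the Nakayama functor termwise to the projective resolution represents $\mathbb{S}(E)$ by the complex $0 \to I_n \to I_{n-1} \to \cdots \to I_1 \to 0$ in cohomological degrees $-(n-1),\dots,0$, and I would verify that the augmented sequence $0 \to E \to I_n \to I_{n-1} \to \cdots \to I_1 \to 0$ is exact. Since $I_n$ is the injective envelope of $\mathrm{soc}(E)=S_n$, $E$ embeds into $I_n$; the structure of each $I_j$ as two ``alternating chains'' of uniserial submodules meeting at its simple socle $S_j$ then implies that $E$ occupies one such chain, so the quotient $I_n/E$ is uniserial of length $n-1$ with socle $S_{n-1}$ and embeds into $I_{n-1}$, and so on. Matching composition factors at each step and checking that the final map $I_2 \to I_1$ is surjective completes the verification. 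This bookkeeping, which also requires checking that the Nakayama-induced differential agrees (up to scalar) with the one coming from the injective co-resolution, is the main obstacle in the proof, but it is a direct calculation given the explicit structure of the $I_j$.

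For $\sigma^*E$, I invoke that $\sigma^*$ is an involutive exact autoequivalence of $\Db(\mod A)$. A standard uniqueness-of-Serre-functor argument yields $\sigma^*\circ\mathbb{S} \cong \mathbb{S}\circ\sigma^*$, so $\mathbb{S}(\sigma^*E) \cong \sigma^*\mathbb{S}(E) \cong (\sigma^*E)[n-1]$ and $\Ext^*(\sigma^*E,\sigma^*E) = \Ext^*(E,E)$ is immediate. The remaining orthogonality $\Hom^*(\sigma^*E,E)=0$ follows from $\Hom^*(E,\sigma^*E)=0$ together with $(\sigma^*)^2=\mathrm{id}$.
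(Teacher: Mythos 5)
The paper does not prove this proposition; it is quoted as \cite[4.1]{dugas}, so there is no in-text argument to compare against. Evaluating your reconstruction on its own merits: the minimal projective resolution $0 \to P_n \to \cdots \to P_1 \to E \to 0$ is correct, the parity analysis of the induced differentials (zero for $j$ odd in the $E$ case, zero for $j$ even in the $\sigma^*E$ case) is correct and does give $\Ext^*(E,E)\cong k\oplus k[1-n]$ and $\Hom^*(E,\sigma^*E)=0$, and the transfer to $\sigma^*E$ via involutivity of $\sigma^*$ together with $\sigma^*\mathbb{S}\cong\mathbb{S}\sigma^*$ is exactly the right move. (Small slip: for a right module $M$ one has $\Hom_A(P_j,M)\cong Me_j$, not $e_jM$; both are one-dimensional here, so nothing downstream is affected.)

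Where the proposal stops short of a proof is the verification that $\mathbb{S}(E)\cong E[n-1]$, which you correctly flag as the delicate point. Applying $\boldnu$ termwise does represent $\mathbb{S}(E)$ by a complex $I_n\to\cdots\to I_1$ of injectives in degrees $1-n,\ldots,0$, and your intended reduction — show that $0\to E\to I_n\to\cdots\to I_1\to 0$ is exact — is the right one. The sketch is plausible: $E$ is the injective envelope of $S_n$, one checks $I_n/E\cong\sigma^*E/\operatorname{soc}$ is uniserial with socle $S_{n-1}$, and the Euler characteristic vanishes since $n-\sum_{j=1}^n(-1)^{n-j}(2j-1)=n-n=0$ for $n$ even. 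But what is left to do is substantial: at the second step and beyond you are no longer quotienting $I_{j}$ by one of its two ``branches'' but by the image of the previous cokernel, so the structure of each successive quotient (uniserial, alternating between $x$-type and $y$-type) needs to be established, the surjectivity of $I_2\to I_1$ needs a check, and — crucially — you must verify that the differentials $\boldnu(y\cdot)\colon I_{j+1}\to I_j$ (i.e.\ the $k$-duals of $\cdot y\colon Ae_j\to Ae_{j+1}$) actually realize this exact sequence, not merely that some coresolution of the right shape exists. Equivalently, one should compute $\ker(I_n\to I_{n-1})=D(Ae_n/Ae_{n-1}y)$ and identify it with $E$ as an $A$-module, then argue exactness at the interior terms by a rank count. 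These are finite explicit computations that I expect to go through, but as written the argument defers precisely the part that constitutes the Calabi--Yau half of the sphericality claim, so the proposal is an outline rather than a complete proof.
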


Now $E$ defines a spherical twist functor, which we can apply to $A$ to obtain a tilting complex $T$ that fits into an exact triangle 
\begin{equation} \label{eq:defT} E[1-n]^{n} \to A \to T \to E[2-n]^n\end{equation} in $\Db(\mod A)$.
By applying $\sigma^*$, and using the fact that $\sigma^*A \cong A$ we obtain another triangle
\begin{equation} \label{eq:defT2} \sigma^*E[1-n]^{n} \to A \to \sigma^*T \to \sigma^*E[2-n]^n.\end{equation}

To get a weakly symmetric algebra, we can form the \emph{twisted trivial extension} of $A$ using the automorphism $\sigma$.  Thus we define $\Lambda \colonequals T_\sigma A = A \ltimes {}_{\sigma} DA$, where the latter denotes the usual bimodule extension of $A$ by the bimodule ${}_{\sigma} DA$.  The idempotents $e_i$ of $A$ induce a complete set of primitive orthogonal idempotents $(e_i,0)$ of $\Lambda$, which we will continue to write as $e_i$.  In general, by \cite[Prop. 2.2]{grant} the Nakayama automorphism $\nu$ of $T_{\sigma}A$ is given by 
\begin{equation} \label{eq:NakayamaAut} \nu (a, f) = (\sigma(a), f \sigma^{-1}). \end{equation}
In particular, since $\sigma$ fixes the idempotents $e_i$ of $A$, we see also that $\nu(e_i) = e_i$ for all $i$, and thus $\Lambda$ is weakly symmetric.  The quiver and relations of a twisted trivial extension can be computed as described in \cite[\S 3]{guo}, for example.  In our case, we see that $\Lambda$ has quiver
$$\xymatrix{1 \ar[r]<0.5ex>^x \ar[r]<-0.5ex>_y & 2 \ar[r]<0.5ex>^x \ar[r]<-0.5ex>_y & \cdots \ar[r]<0.5ex>^x \ar[r]<-0.5ex>_y & n  \ar@/^1.5pc/[lll]^u \ar@/_1.5pc/[lll]_v}$$
with relations $x^2 = y^2 = 0$ and $xv = ux = yu = vy = 0$, together with additional relations expressing equality of the two (remaining) nonzero paths of length $n$ at each vertex:  for $0 \leq r < q := n/2$,
$$(xy)^r v (xy)^{q-r-1}x = (yx)^r u (yx)^{q-r-1}y\ \mbox{and}  \ (xy)^r x u (yx)^{q-r-1} = (yx)^r y v (xy)^{q-r-1}.$$
Furthermore, we can see from (\ref{eq:NakayamaAut}) that $\nu$ swaps each pair $x$ and $y$ (of parallel arrows), while also swapping $u$ and $v$.  For $n=4$, the tilting complex $T \in \Kb(\proj A)$ from \eqref{eq:defT} is described in \cite{dugas}.  The corresponding complex $T \otimes_A \Lambda \in \Kb(\proj \Lambda)$ will look the same, but with each $A$ replaced by $\Lambda$.  Its indecomposable summands are as follows, where we
indicate the degree-0 term by underlining it:

$$\xymatrix{ 0 \ar[r] & 0 \ar[r] & \ul{e_3\Lambda} \ar[r]^{y}  & e_2\Lambda \ar[r]^{y} &  e_1\Lambda \ar[r] & 0 \\
  0 \ar[r] & e_4\Lambda \ar[r]^(.4){\scriptsize \begin{pmatrix}x \\ y \end{pmatrix}} & \ul{(e_3\Lambda)^2} \ar[r]^(.55){\scriptsize \begin{pmatrix}0 & y\end{pmatrix}} & e_2\Lambda \ar[r]^{y} & e_1 \Lambda \ar[r] & 0 \\
 0 \ar[r] & e_4\Lambda \ar[r]^(0.35){\scriptsize \begin{pmatrix}yx \\ y \end{pmatrix}} & \ul{e_2\Lambda \oplus e_3 \Lambda} \ar[r]^(0.65){\scriptsize \begin{pmatrix}0 & y\end{pmatrix}} & e_2\Lambda \ar[r]^{y} & e_1 \Lambda \ar[r] & 0 \\
 0 \ar[r] &  e_4\Lambda \ar[r]^(0.35){\scriptsize \begin{pmatrix}xyx \\ y \end{pmatrix}} & \ul{e_1\Lambda \oplus e_3 \Lambda} \ar[r]^(0.65){\scriptsize \begin{pmatrix}0 & y\end{pmatrix}} & e_2\Lambda \ar[r]^{y} & e_1 \Lambda \ar[r] & 0}$$

As this complex is clearly not invariant under the Nakayama functor $\boldnu$, it is not a tilting complex.  However, it is silting.

\begin{prop}  Let $A, \sigma$ and $T$ be as above, and let $\Lambda = T_{\sigma}A$.  Then $T \otimes_A \Lambda$ is a silting complex in $\Kb(\proj \Lambda)$ that is not tilting.
\end{prop}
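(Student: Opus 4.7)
The proposition has two parts: showing $T \otimes_A \Lambda$ is silting in $\Kb(\proj \Lambda)$, and showing it is not tilting. The plan is to handle the non-tilting claim via Theorem \ref{nakayama stable iff tilting} and a direct observation about how $\boldnu$ acts on $T \otimes_A \Lambda$, and to verify the two silting axioms by an adjunction computation that reduces everything to an Ext vanishing in $\Db(\mod A)$.

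For the non-tilting part, by \eqref{eq:NakayamaAut} the Nakayama automorphism $\nu$ of $\Lambda$ restricts to $\sigma$ on $A \subseteq \Lambda$. Since $T \otimes_A \Lambda$ is a complex of projective $\Lambda$-modules with differentials coming from $A$, applying $\boldnu$ simply twists all differentials by $\sigma$, producing $\sigma^* T \otimes_A \Lambda$. In the $n = 4$ example displayed above this is visibly non-isomorphic to the original complex (every $y$ becomes $x$, every $yx$ becomes $xy$, etc.), and the same conclusion holds for general $n$ because $\sigma^* T \not\cong T$ in $\Kb(\proj A)$, as can be checked by comparing $\RHom(E, T)$ and $\RHom(E, \sigma^* T)$ via \eqref{eq:defT}, \eqref{eq:defT2}, and the Hom-orthogonality of $E$ and $\sigma^* E$ from Proposition \ref{prop:ESpherical}. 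Theorem \ref{nakayama stable iff tilting} then forces $T \otimes_A \Lambda$ to be non-tilting.

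For the silting part, generation is automatic: $A \in \operatorname{thick}(T)$ in $\Kb(\proj A)$, so applying the triangulated functor $-\otimes_A \Lambda$ gives $\Lambda = A \otimes_A \Lambda \in \operatorname{thick}(T \otimes_A \Lambda)$, forcing $\operatorname{thick}(T \otimes_A \Lambda) = \Kb(\proj \Lambda)$. For the positive Hom-vanishing, using that $\Lambda \cong A \oplus {}_\sigma DA$ as a right $A$-module, the induction--restriction adjunction yields
\begin{align*}
\Hom_{\Kb(\proj \Lambda)}(T \otimes_A \Lambda, T \otimes_A \Lambda[k]) \cong \Hom_{\Db(\mod A)}(T, T[k]) \oplus \Hom_{\Db(\mod A)}(T, \mathbb{S}(\sigma^* T)[k]),
\end{align*}
where I identify $T \otimes_A {}_\sigma DA \cong \mathbb{S}(\sigma^* T)$, valid since $\sigma^{-1} = \sigma$. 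The first summand vanishes for $k \neq 0$ because $T$ is tilting. For the second, Serre duality gives $\Hom(T, \mathbb{S}(\sigma^* T)[k]) \cong D\Hom(\sigma^* T, T[-k])$, so it remains to show $\Hom(\sigma^* T, T[m]) = 0$ for $m < 0$.

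This final Ext vanishing is the main obstacle. The plan is to apply $\RHom(-, T)$ to the defining triangle \eqref{eq:defT2} for $\sigma^* T$ and control the resulting long exact sequence via three ingredients: (a) Hom-orthogonality of $E$ and $\sigma^* E$ applied to \eqref{eq:defT} gives $\RHom(\sigma^* E, T) \cong \RHom(\sigma^* E, A)$; (b) Serre duality combined with the sphericality relation $\mathbb{S}(\sigma^* E) \cong \sigma^* E[n-1]$ concentrates $\RHom(\sigma^* E, A)$ entirely in cohomological degree $n-1$; and (c) the cohomology long exact sequence of \eqref{eq:defT} yields $H^m(T) = 0$ for $m \notin \{0, n-2\}$. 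Assembling these in the long exact sequence arising from \eqref{eq:defT2} sandwiches $\Ext^m(\sigma^* T, T)$ between vanishing terms for every $m < 0$, completing the proof.
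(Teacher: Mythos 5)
Your argument for the silting part is correct and follows the same adjunction strategy as the paper, reducing to a Hom-vanishing $\Hom_{\Kb(A)}(\sigma^*T,T[m])=0$ for $m<0$, which (since $\sigma$ has order two and $\mathbb{S}$ commutes with $\sigma^*$) is the same condition the paper establishes. Your mechanics differ slightly and in an instructive way: you apply $\Hom(-,T)$ to \eqref{eq:defT2} and then need two inputs, namely that $\RHom(\sigma^*E,T)\cong\RHom(\sigma^*E,A)$ is concentrated in degree $n-1$ (via Hom-orthogonality plus $\mathbb{S}(\sigma^*E)\cong\sigma^*E[n-1]$ and Serre duality) and that $H^m(T)=0$ for $m\notin\{0,n-2\}$; the paper instead applies $\Hom(T,-)$ to \eqref{eq:defT2} to get $\Hom(T,\sigma^*T[j])\cong\Hom(T,A[j])$ for $j\neq n-2,n-1$, and then kills $\Hom(T,A[j])$ via \eqref{eq:defT} and Serre duality. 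Both are valid; your route uses the sphericality of $\sigma^*E$ more explicitly, while the paper's route has the small extra payoff of identifying $\Hom(T,\sigma^*T[n-2])\cong \sigma^*E^n\neq 0$ along the way, which it then reuses.

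That reuse is precisely where your proof has a genuine gap: the non-tilting claim. You correctly compute that $\boldnu(T\otimes_A\Lambda)\cong\sigma^*T\otimes_A\Lambda$ and then argue this is not isomorphic to $T\otimes_A\Lambda$ because $\sigma^*T\not\cong T$ in $\Kb(\proj A)$. But the functor $-\otimes_A\Lambda$ from $\Kb(\proj A)$ to $\Kb(\proj\Lambda)$ is not full (the adjunction you invoke shows $\End(T\otimes_A\Lambda)$ is strictly larger than $\End(T)$), so it does not reflect isomorphisms, and $\sigma^*T\not\cong T$ over $A$ does not a priori preclude $\sigma^*T\otimes_A\Lambda\cong T\otimes_A\Lambda$ over $\Lambda$. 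The paper acknowledges exactly this by labelling the ``visibly not $\boldnu$-invariant'' observation as informal and supplying a rigorous replacement: a direct computation, via the same adjunction and Serre duality, that
\begin{align*}
\Hom_{\Kb(\proj\Lambda)}\bigl(T\otimes_A\Lambda, T\otimes_A\Lambda[2-n]\bigr)\cong D\Hom_{\Kb(A)}(T,\sigma^*T[n-2])\cong D(\sigma^*E)^n\neq 0,
\end{align*}
so that $T\otimes_A\Lambda$ fails the tilting vanishing in degree $2-n<0$. To close your argument you should either carry out this nonzero-extension computation (your ingredient (a) together with the paper's isomorphism \eqref{eq:iso} gives it almost immediately), or else prove directly that $\sigma^*T\otimes_A\Lambda\not\cong T\otimes_A\Lambda$ in $\Kb(\proj\Lambda)$, which is a stronger statement than $\sigma^*T\not\cong T$.
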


\begin{proof}  
The proof is similar to Rickard's that $T \otimes_A TA$ is a tilting complex over the trivial extension algebra $TA$ for any tilting complex $T$ over $A$ \cite{DCSE}. We begin by noting that $T \otimes_A \Lambda$ generates $\Kb(\proj \Lambda)$. This can be seen using that $T$ generates $\Kb(\proj A)$ and $-\otimes_A \Lambda : \Kb(\proj A) \to \Kb(\proj \Lambda)$ is an exact functor of triangulated categories taking $A$ to $\Lambda$.  It remains to show that 
$$\Hom_{\Kb(\Lambda)}(T\otimes_A \Lambda, T \otimes_A \Lambda [i]) = 0 \ \mbox{for\ all}\ i>0.$$  To this end, observe that for all $i \neq 0$
\begin{eqnarray*} \Hom_{\Kb(\Lambda)}(T \otimes_A \Lambda, T \otimes_A \Lambda [i]) & \cong & \Hom_{\Kb(A)}(T, T\otimes_A \Lambda[i]) \\
& = & \Hom_{\Kb(A)}(T,T[i] \oplus T \otimes_A {}_{\sigma}DA[i]) \\ & \cong & \Hom_{\Kb(A)}(T,T[i]) \oplus \Hom_{\Kb(A)}(T,(\sigma^*)^{-1} \mathbb{S}T[i]) \\
& \cong & 0 \oplus \Hom_{\Kb(A)}(\sigma^* T, \mathbb{S}T[i]) \\
& \cong & D\Hom_{\Kb(A)}(T, \sigma^* T[-i]),
\end{eqnarray*}
where the penultimate isomorphism is from the fact that $T_A$ is a tilting complex and the last is by Serre duality.  Thus it suffices to show that $\Hom_{\Kb(A)}(T, \sigma^*T[j]) = 0$ for all $j<0$.

   For the remainder of the proof, we are working in the category $\Kb(\proj A)$, and so we will omit the corresponding subscripts in our Hom-spaces.  Applying $\Hom(-,\sigma^*E[j])$ to (\ref{eq:defT}) and using the fact that $E$ and $\sigma^*E$ are Hom-orthogonal, we get isomorphisms
\begin{equation} \label{eq:iso} \Hom(T,\sigma^*E[j]) \cong \Hom(A,\sigma^*E[j]) \end{equation}
 for all $j$, and the latter vanishes for all $j \neq 0$ since the homology of $\sigma^*E$ is concentrated in degree $0$.
Now we apply $\Hom(T,-)$ to (\ref{eq:defT2}), which yields isomorphisms $$\Hom(T,A[j]) \cong \Hom(T,\sigma^*T[j])$$ for all $j \neq n-2,n-1$.  In particular, for all $j<0$, we have $\Hom(T,\sigma^*T[j]) \cong \Hom(T,A[j])$.   
We now show that $\Hom(T,A[j]) = 0$ for $j<0$.  Apply $\Hom(-,A[j])$ to (\ref{eq:defT}) to get an exact sequence
\begin{equation}\label{eq:seq} \Hom(E[2-n]^n,A[j]) \to \Hom(T,A[j]) \to \Hom(A,A[j]). \end{equation}
 By Serre duality, the first term is isomorphic to $$D\Hom(A[j],\mathbb{S}(E)[2-n]^n) \cong D\Hom(A[j],E[1]^n) \cong D\Hom(A,E[1-j]^n) = 0$$ for $j \neq 1$.  As the last term of (\ref{eq:seq}) vanishes for $j \neq 0$, we see that $\Hom(T,A[j]) = 0$ for all $j<0$, as required (in fact, for all $j \neq 0,1$).

Consequently, $\Hom(T,\sigma^*T[j]) = 0$ for all $j<0$ as required.  Thus $T \otimes_A \Lambda$ is a silting complex.  While, we can see that $T \otimes_A \Lambda$ is not a tilting complex since it is not invariant under the Nakayama functor of $\Lambda$, which switches $x$ and $y$, we also provide a direct proof by showing that it has nonzero self-extensions in degree $2-n$. 

Applying $\Hom(T,-)$ to  (\ref{eq:defT2}), and using $\Hom(T,A[j]) = 0$ for $j \neq 0,1$, and then (\ref{eq:iso}), gives $$\Hom(T,\sigma^*T[n-2]) \cong \Hom(T, \sigma^*E^n) \cong \Hom(A,\sigma^*E^n) \cong \sigma^*E^n.$$

Thus $$\Hom_{\Kb(\Lambda)}(T \otimes_A \Lambda, T \otimes_A \Lambda[2-n]) \cong D\Hom(T,\sigma^*T[n-2]) \cong D(\sigma^*E)^n \neq 0.  $$ 

\end{proof}

As a consequence of Proposition \ref{tdwsa}, the algebra $\Lambda$ is not tilting-discrete.  In fact, combining with Corollary \ref{silting connected}, we see that it is not even weakly silting connected.  We conclude by pointing out another interesting property of the silting complex $T$, which follows from Propositions \ref{weaklySymTilting} and \ref{strong nakayama stable}, and to our knowledge has not been observed in other examples.

\begin{cor}  For $\Lambda$ and $T$ as defined above, $T$ is a silting complex which is not connected to any tilting complex by iterated silting mutations.
\end{cor}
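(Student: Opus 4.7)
The plan is to argue by contradiction. Suppose that $T$ (shorthand for $T \otimes_A \Lambda$) is connected to some tilting complex $T' \in \Kb(\proj \Lambda)$ by a finite chain of iterated silting mutations $T = T_0, T_1, \ldots, T_k = T'$, where each $T_{i+1}$ is a (not necessarily irreducible, left or right) mutation of $T_i$. Since $\Lambda$ is weakly symmetric, applying Proposition \ref{weaklySymTilting} to $T'$ shows that $T'$ is strongly Nakayama stable.

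The key step is to reverse the mutation chain. Silting mutations are invertible: the left mutation $\mu_X^{-}$ at a summand $X$ and the right mutation $\mu_{X'}^{+}$ at the newly produced summand $X'$ are mutually inverse operations, directly from the defining approximation triangles (see \cite{AI}). Hence $T_k, T_{k-1}, \ldots, T_0$ is again an iterated sequence of silting mutations, but now starting from the strongly Nakayama stable tilting complex $T_k = T'$. I would then induct down this reversed chain using Proposition \ref{strong nakayama stable}: assuming $T_i$ is a strongly Nakayama stable tilting complex, its mutation $T_{i-1}$ is a strongly Nakayama stable silting complex; since strong Nakayama stability implies Nakayama stability, Theorem \ref{nakayama stable iff tilting} promotes $T_{i-1}$ to a tilting complex, feeding the next inductive step. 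Iterating down to $i=0$ forces $T$ itself to be a strongly Nakayama stable tilting complex, directly contradicting the previous proposition, which exhibited $\boldnu T \not\cong T$ (and, independently, nonzero self-extensions of $T$ in degree $2-n$).

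The only subtle point is the reversal of the mutation chain, so that Proposition \ref{strong nakayama stable} — which only propagates strong Nakayama stability forward from a fixed starting point — can actually be applied starting at the strongly Nakayama stable tilting complex $T'$ rather than at $T$. Once invertibility of silting mutations is granted, the induction is immediate, with Theorem \ref{nakayama stable iff tilting} ensuring at each step that the hypothesis of Proposition \ref{strong nakayama stable} (i.e.\ tilting, not merely silting) remains satisfied.
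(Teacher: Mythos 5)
Your proof is correct and is exactly the argument the paper has in mind when it says the corollary ``follows from Propositions \ref{weaklySymTilting} and \ref{strong nakayama stable}.'' You correctly identify the one subtle point (reversing the mutation chain via the mutual inverseness of left and right mutation from \cite{AI}) and the one place where an auxiliary ingredient is needed (Theorem \ref{nakayama stable iff tilting} to promote each newly-produced strongly Nakayama stable silting complex back to a tilting complex so that Proposition \ref{strong nakayama stable} can be reapplied).
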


\end{document}